\author[P.~Leonetti]{Paolo Leonetti$^\dagger\,$}
\thanks{$\dagger\,$P.~Leonetti is supported by the Austrian Science Fund (FWF), project F5512-N26}
\address{Graz University of Technology, Institute of Analysis and Number Theory, Graz, Austria}
\email{leonetti.paolo@gmail.com}
\author[C.~Sanna]{Carlo Sanna$^\ddagger$}
\thanks{$\ddagger\,$C.~Sanna is supported by a postdoctoral fellowship of INdAM and is a member of the INdAM group GNSAGA}
\address{Universit\`a degli Studi di Genova\\Department of Mathematics\\Genova, Italy}
\email{carlo.sanna.dev@gmail.com}
\keywords{binomial coefficient; central binomial coefficient; practical number}
\subjclass[2010]{Primary: 11B65, Secondary: 11N25.}
\title{Practical numbers among the\\ binomial coefficients}
\newtheorem{thm}{Theorem}[section]
\newtheorem{cor}{Corollary}[section]
\newtheorem{lem}[thm]{Lemma}
\newtheorem{pro}{Proposition}[section]
\theoremstyle{remark}
\newtheorem{rmk}{Remark}[section]
\newtheorem{que}{Question}[section]
\begin{document}

\begin{abstract}
A \emph{practical number} is a positive integer $n$ such that every positive integer less than $n$ can be written as a sum of distinct divisors of $n$.
We prove that most of the binomial coefficients are practical numbers.
Precisely, letting $f(n)$ denote the number of binomial coefficients $\binom{n}{k}$, with $0 \leq k \leq n$, that are not practical numbers, we show that
\begin{equation*}
f(n) < n^{1 - (\log 2 - \delta)/\log \log n}
\end{equation*}
for all integers $n \in [3, x]$, but at most $O_\gamma(x^{1 - (\delta - \gamma) / \log \log x})$ exceptions, for all $x \geq 3$ and $0 < \gamma < \delta < \log 2$.
Furthermore, we prove that the central binomial coefficient $\binom{2n}{n}$ is a practical number for all positive integers $n \leq x$ but at most $O(x^{0.88097})$ exceptions.
We also pose some questions on this topic.
\end{abstract}

\maketitle
\thispagestyle{empty}

\section{Introduction}

A \emph{practical number} is a positive integer $n$ such that every positive integer less than $n$ can be written as a sum of distinct divisors of $n$.
This property has been introduced by Srinivasan~\cite{MR0027799}.
Estimates for the counting function of practical numbers have been given by Hausman--Shapiro~\cite{MR752596}, Tenenbaum~\cite{MR860809}, Margenstern~\cite{MR1089787}, Saias~\cite{MR1430008}, and finally Weingartner~\cite{MR3356847}, who proved that there are asymptotically $Cx/\log x$ practical numbers less than $x$, for some constant $C>0$, as previously conjectured by Margenstern~\cite{MR1089787}. 
On another direction, Melfi~\cite{MR1370203} proved that every positive even integer is the sum of two practical numbers, and that there are infinitely many triples $(n,n+2,n+4)$ of practical numbers. 
Also, Melfi~\cite{MR1452391} proved that in every Lucas sequence, satisfying some mild conditions, there are infinitely many practical numbers, and Sanna~\cite{SannaInPress} gave a lower bound for their counting function.

In this work, we study the binomial coefficients which are also practical numbers. 
Our first result, informally, states that for almost all positive integers $n$ there is a negligible amount of binomial coefficients $\binom{n}{k}$, with $0 \leq k \leq n$, which are not practical. 
Precisely, for each positive integer $n$, define 
\begin{equation*}
f(n) := \#\!\left\{0 \leq k \leq n : \binom{n}{k} \text{ is not a practical number}\right\}.
\end{equation*}
Our first result is the following.

\begin{thm}\label{thm:row}
For all $x \geq 3$ and $0 < \gamma < \delta < \log 2$, we have 
\begin{equation*}
f(n) < n^{1 - (\log 2 - \delta)/\log \log n}
\end{equation*}
for all integers $n \in [3, x]$, but at most $O_\gamma(x^{1 - (\delta - \gamma) / \log \log x})$ exceptions.
\end{thm}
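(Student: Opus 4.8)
The plan is to extract a simple sufficient condition for a binomial coefficient to be a practical number, reformulate its failure as a statement about small primes not dividing $\binom{n}{k}$, and then control how often such failures can be numerous by a first–moment argument carried out over dyadic ranges.

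For the sufficient condition I would use Stewart's characterisation of practical numbers: if $m = q_1^{b_1}\cdots q_r^{b_r}$ with $q_1 < \cdots < q_r$ primes, then $m$ is practical exactly when $q_1 = 2$ (or $m = 1$) and $q_i \leq \sigma(q_1^{b_1}\cdots q_{i-1}^{b_{i-1}}) + 1$ for $2 \leq i \leq r$. Combining this with Bertrand's postulate and the Chebyshev-type inequality $\prod_{p \leq t} p \geq e^{c_0 t}$ (valid for an absolute $c_0 > 0$ and all $t \geq 2$), I would prove: \emph{if $N$ is even, every prime $\leq P$ divides $N$, and $\prod_{p \leq P} p$ is at least the largest prime factor of $N$, then $N$ is practical.} The cascade $q_i \leq \sigma(\cdots)+1$ is checked directly at the prime $3$ (where $\sigma(2^{b_1})+1 = 2^{b_1+1} \geq 4$), follows from $p_1 p_2 \cdots p_j \geq p_{j+1}$ at the remaining primes $p_{j+1} \leq P$, and for any prime factor $q > P$ reduces to $\sigma\!\big(\prod_{p \leq P} p^{v_p(N)}\big) + 1 > \prod_{p \leq P} p \geq q$. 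Since every prime dividing $\binom{n}{k}$ is $\leq n$, one may take $P = P(n) \asymp \log n$ (any fixed multiple of $\log n$ large enough that $\prod_{p \leq P(n)} p \geq n$), and conclude: \emph{if $\binom{n}{k}$ is even and divisible by every prime $\leq P(n)$, then $\binom{n}{k}$ is a practical number.}

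Contrapositively, if $\binom{n}{k}$ is not practical then some prime $p \leq P(n)$ does not divide it — namely $p = 2$ when $\binom{n}{k}$ is odd, or a witness supplied above otherwise. By Lucas' theorem, $p \nmid \binom{n}{k}$ if and only if each base-$p$ digit of $k$ is at most the corresponding digit of $n$, so the number of such $k$ equals $g_p(n) := \prod_i (d_i + 1)$, the product running over the base-$p$ digits $d_i$ of $n$. Therefore
\begin{equation*}
f(n) \;\leq\; \sum_{p \,\leq\, P(n)} g_p(n).
\end{equation*}
For a typical $n$ the digits behave like independent uniform elements of $\{0,\dots,p-1\}$, so $g_p(n)$ has size about $n^{\alpha_p}$ with $\alpha_p := \log\!\big(\tfrac{p+1}{2}\big)/\log p < 1$; the largest exponent that can occur, $\alpha_{P(n)}$, equals $1 - (\log 2)/\log\log n + O\!\big(1/(\log\log n)^2\big)$, which sits just below the target exponent $1 - (\log 2 - \delta)/\log\log n$. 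That is the mechanism behind the theorem.

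To turn this into an exceptional-set estimate I would first compute $\sum_{0 \leq n < p^{M}} g_p(n) = \big(\tfrac{p(p+1)}{2}\big)^{M}$, from which $\sum_{n \leq y} g_p(n) \ll_p y^{1+\alpha_p}$, whence Markov's inequality gives $\#\{n \leq y : g_p(n) \geq V\} \ll_p y^{1+\alpha_p}/V$ for all $V>0$. Now work dyadically: if $n \in (y/2, y]$ satisfies $f(n) \geq n^{1-(\log 2 - \delta)/\log\log n}$, then — using $f(n) \leq \sum_{p \leq P(y)} g_p(n)$, the fact that there are only $O(\log y)$ primes $p \leq P(y)$, and that $\log\log n$ and $\log\log y$ differ by $O(1)$ — some prime $p \leq P(y)$ has $g_p(n) \geq y^{1 - (\log 2 - \delta)/\log\log y - o(1/\log\log y)}$. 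Inserting this into the Markov bound and summing over $p \leq P(y)$, each term has $y$-exponent at most $\alpha_p + (\log 2 - \delta)/\log\log y + o(1/\log\log y)$, which is maximised near $p \approx P(y)$, where by the estimate for $\alpha_{P(y)}$ it equals $1 - \delta/\log\log y + o(1/\log\log y)$; the surviving polylogarithmic factors contribute only a further $y^{o(1/\log\log y)}$. Hence for $y \geq y_0(\gamma)$, where the $o(1/\log\log y)$ is dominated by $\gamma/\log\log y$, the block $(y/2, y]$ contains $O\!\big(y^{1-(\delta-\gamma)/\log\log y}\big)$ exceptions; since $\log\log(x/2^j) \leq \log\log x$, summing this geometric series over the dyadic blocks down to $y_0(\gamma)$ and absorbing the $O_\gamma(1)$ integers below $y_0(\gamma)$ yields $O_\gamma\!\big(x^{1-(\delta-\gamma)/\log\log x}\big)$ exceptions in $[3,x]$. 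The delicate point is exactly this bookkeeping: all the $o(1/\log\log y)$ losses — from the approximation of $\alpha_{P(n)}$, from the accumulated polylogarithmic factors, and from the dyadic summation — must together stay below the available margin $\gamma/\log\log y$; this is what forces $P(n)$ to be a sufficiently large constant multiple of $\log n$ and makes the monotonicity of $\log\log$ across dyadic blocks the key elementary observation, whereas the practical-number input is entirely routine.
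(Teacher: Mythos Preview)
Your proposal is correct and follows essentially the same route as the paper: both arguments establish that $\binom{n}{k}$ is practical whenever it is divisible by every prime up to roughly $\log n$, deduce $f(n)\le \sum_{p\lesssim \log n} T_p(n)$ (your $g_p$ is the paper's $T_p$, your $\alpha_p$ is the paper's $\omega_p$), and then control the exceptional $n$ by the first-moment computation $\sum_{n<p^M}T_p(n)=(p(p+1)/2)^M$ together with Markov's inequality. The only cosmetic differences are that the paper packages the Markov step into a separate lemma applied with a single $\varepsilon=\varepsilon(x)$ and uses the cutoff $p_{\kappa(n)}\sim\log n$, whereas you derive the practical-number criterion directly from Stewart's characterisation and carry out the exceptional-set count via an explicit dyadic decomposition; neither change affects the substance of the argument.
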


As a consequence, we obtain that as $x \to +\infty$ almost all binomial coefficients $\binom{n}{k}$, with $0 \leq k \leq n \leq x$, are practical numbers.

\begin{cor}\label{cor:nk}
We have
\begin{equation*}
\sum_{n \,\leq\, x} f(n) \ll_\varepsilon x^{2 - (\frac1{2}\log 2 - \varepsilon) / \log \log x} ,
\end{equation*}
for all $x \geq 3$ and $\varepsilon > 0$.
\end{cor}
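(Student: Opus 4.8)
The plan is to split the sum $\sum_{n \le x} f(n)$ according to whether $n$ is one of the $O_\gamma(\,\cdot\,)$ exceptional integers provided by Theorem~\ref{thm:row}, using the sharp estimate on the complement of the exceptional set and the trivial bound $f(n) \le n + 1$ on it. First I would fix $\varepsilon > 0$ and dispose of two easy cases. If $\varepsilon \ge \tfrac{1}{2}\log 2$, then the exponent on the right-hand side is at least $2$, whereas $\sum_{n \le x} f(n) \ll x^2$, so there is nothing to prove; and since $\sum_{n \le x} f(n) \ll x^2$ while the function $x \mapsto x^{2 - (\frac{1}{2}\log 2 - \varepsilon)/\log\log x}$ is continuous and strictly positive on $[3, +\infty)$, the claimed inequality holds for all $x$ in any bounded subinterval of $[3, +\infty)$ once the implied constant is taken large enough in terms of $\varepsilon$. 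Hence I may assume $0 < \varepsilon < \tfrac{1}{2}\log 2$ and that $x$ is larger than a suitable constant depending on $\varepsilon$. Then I would apply Theorem~\ref{thm:row} with $\delta := \tfrac{1}{2}\log 2$ and $\gamma := \varepsilon$, which is legitimate since then $0 < \gamma < \delta < \log 2$, obtaining a set $\mathcal{E}$ of at most $O_\varepsilon\big(x^{1 - (\delta - \gamma)/\log\log x}\big)$ integers in $[3, x]$ outside of which $f(n) < n^{1 - (\log 2 - \delta)/\log\log n}$.

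For the non-exceptional integers I would use the monotonicity of $\log\log$: since $3 \le n \le x$ forces $0 < \log\log n \le \log\log x$, it follows that
\begin{equation*}
f(n) \;<\; n^{1 - (\log 2 - \delta)/\log\log n} \;\le\; n^{1 - (\log 2 - \delta)/\log\log x} \;=\; n^{1 - (\frac{1}{2}\log 2)/\log\log x}.
\end{equation*}
Writing $s := (\tfrac{1}{2}\log 2)/\log\log x$, which lies in $(0, 1)$ since $x$ is large, and comparing $\sum_{n \le x} n^{1 - s}$ with $\int_{1}^{x+1} t^{1 - s}\,\mathrm{d}t$, this gives
\begin{equation*}
\sum_{\substack{3 \,\le\, n \,\le\, x \\ n \,\notin\, \mathcal{E}}} f(n) \;\ll\; \sum_{n \,\le\, x} n^{1 - s} \;\ll\; x^{2 - s} \;=\; x^{2 - (\frac{1}{2}\log 2)/\log\log x} \;\le\; x^{2 - (\frac{1}{2}\log 2 - \varepsilon)/\log\log x}.
\end{equation*}
For the exceptional integers I would only use $f(n) \le n + 1 \le 2x$, so that, by the bound on $\#\mathcal{E}$,
\begin{equation*}
\sum_{n \,\in\, \mathcal{E}} f(n) \;\le\; 2x\,\#\mathcal{E} \;\ll_\varepsilon\; x \cdot x^{1 - (\delta - \gamma)/\log\log x} \;=\; x^{2 - (\frac{1}{2}\log 2 - \varepsilon)/\log\log x}.
\end{equation*}
Adding these two contributions, together with the bounded contribution of the finitely many $n < 3$, would yield the stated bound.

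I do not expect any real obstacle here: the argument is entirely elementary once Theorem~\ref{thm:row} is available. The only points requiring mild care are the (lossless) replacement of $\log\log n$ by $\log\log x$ uniformly for $n \le x$, and the calibration of the parameters $\gamma$ and $\delta$ in Theorem~\ref{thm:row} so that both the ``typical'' and the ``exceptional'' parts are absorbed into $x^{2 - (\frac{1}{2}\log 2 - \varepsilon)/\log\log x}$; a larger $\delta$ would worsen the typical part, whereas a smaller $\delta$ (or a larger $\gamma$) would worsen the count of exceptions.
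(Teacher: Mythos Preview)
Your proof is correct and follows essentially the same approach as the paper: split the sum over $n\le x$ into the exceptional set $\mathcal{E}$ from Theorem~\ref{thm:row} (bounded trivially by $f(n)\le n+1$) and its complement (bounded by $n^{1-(\log 2-\delta)/\log\log n}$), then sum. The only cosmetic difference is the calibration of parameters --- the paper takes $\gamma=2\varepsilon$ and $\delta=\tfrac12\log 2+\varepsilon$ so that both the typical and exceptional contributions land exactly on the target exponent, whereas you take $\delta=\tfrac12\log 2$, $\gamma=\varepsilon$, obtaining a slightly better typical exponent which you then relax; either choice works.
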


Among the binomial coefficients, the \emph{central binomial coefficients} $\binom{2n}{n}$ are of great interest.
In particular, several authors have studied their arithmetic and divisibility properties, see e.g.~\cite{MR1613294, MR0369288, MR3383891, MR3801088, MR777971}.

In this direction, our second result, again informally, states that almost all central binomial coefficients $\binom{2n}{n}$ are practical numbers.

\begin{thm}\label{thm:central}
For $x \geq 1$, the central binomial coefficient $\binom{2n}{n}$ is a practical number for all positive integers $n \leq x$ but at most $O(x^{0.88097})$ exceptions.
\end{thm}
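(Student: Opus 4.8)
The plan is to verify, for all $n\le x$ outside a set of size $O(x^{0.88097})$, the classical structural criterion for practical numbers: a number $N=p_1^{a_1}\cdots p_r^{a_r}$ with $2\le p_1<\cdots<p_r$ is practical if and only if $p_1=2$ and $p_{j+1}\le 1+\sigma(p_1^{a_1}\cdots p_j^{a_j})$ for every $j<r$; I will also use the companion fact that if $N$ is practical and $1\le d\le 1+\sigma(N)$ then $Nd$ is practical (so products of practical numbers are practical). The workhorse is Kummer's theorem, by which $v_p\!\binom{2n}{n}$ equals the number of carries in the base-$p$ addition of $n$ with itself; in particular $v_2\!\binom{2n}{n}=s_2(n)\ge 1$, so $\binom{2n}{n}$ is always even, and a prime $p$ divides $\binom{2n}{n}$ exactly when some base-$p$ digit of $n$ exceeds $(p-1)/2$.

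I would first pin down the divisibility pattern of $m:=\binom{2n}{n}$ that holds for every $n$. A short base-$p$ computation gives $v_p(m)=1$ for all primes $p\in(n,2n]$ and, more generally, $v_p(m)\ge 1$ exactly when $\lfloor 2n/p\rfloor$ is odd (for $p>\sqrt{2n}$); equivalently, $m$ is divisible by every prime lying in one of the intervals $\bigl(\tfrac{n}{j+1},\tfrac{2n}{2j+1}\bigr]$, $j\ge 0$, and by no prime in the gaps between them. Consecutive such intervals have endpoint-ratio below $2$, so, invoking a bound on primes in short intervals, $m$ has a prime factor in each window of a chain whose windows cover $[\,n^{\theta},2n\,]$ for a fixed $\theta<1$, and the product of these guaranteed prime factors below any $t\in[\,n^{\theta},2n\,]$ already exceeds $t$. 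Hence Stewart's inequalities hold automatically for every prime factor of $m$ above $n^{\theta}$, and $m$ is practical as soon as they also hold for the prime factors of $m$ below $n^{\theta}$. The cleanest sufficient condition I would isolate for that is: $3\mid m$ and $2^{v_2(m)}\,3^{v_3(m)}\ge n^{\theta}$ — then the running products of prime-power parts climb past $n^{\theta}$ with no offending gap — together with the mild fallback of replacing $3$ by the least odd prime dividing $m$ when $3\nmid m$ (which must then satisfy $q\le 1+\sigma(2^{v_2(m)})$).

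It remains to bound the $n\le x$ failing this sufficient condition, which I would split into $\{3\nmid m\}$, its variant with the least odd prime factor, and $\{3\mid m,\ 2^{v_2(m)}3^{v_3(m)}<n^{\theta}\}$, plus degenerate cases ($n$ a power of $2$, or $n$ with only the digits $0,1$ in base $3$) contributing only $x^{o(1)}$. The number of $n\le x$ with $p\nmid m$ for a fixed small prime $p$ equals the number of $n\le x$ all of whose base-$p$ digits lie in $\{0,1,\ldots,(p-1)/2\}$, which is $O\!\bigl(x^{\log((p+1)/2)/\log p}\bigr)$; and the number of $n\le x$ with $2^{v_2(m)}3^{v_3(m)}<n^{\theta}$ is controlled by a joint large-deviation estimate for the binary digit sum $s_2(n)$ and the base-$3$ carry count $v_3(m)$, using that these two digit statistics are essentially independent. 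Summing and optimizing the resulting rate functions over the constraint $v_2(m)+v_3(m)\log_2 3<\theta\log_2 n$ yields the bound $O(x^{0.88097})$.

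The step I expect to be hardest is this last optimization together with the large-deviation input: one must count, uniformly in $x$, integers $n\le x$ whose binary digit sum and base-$3$ carry count are simultaneously small (the carry count is an additive functional of a Markov chain on the base-$3$ digits, so its rate function is not the i.i.d.\ one), and then minimize the two rate functions over the admissible region; the precise value $0.88097$ falls out of this minimization, and it is moreover sensitive to $\theta$ — hence to the strength of the prime-in-short-intervals estimate one uses in the second step.
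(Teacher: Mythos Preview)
Your proposal takes a very different route from the paper, and the gap lies precisely in the step you flag as hardest: the claim that optimizing over the base-$2$ and base-$3$ large-deviation rates produces the exponent $0.88097$ is not correct. That specific constant is not a two-prime artifact at all. In the paper the argument is much shorter: one fixes the first $s=16$ primes, applies a Hoeffding bound to each digit count $\beta_{p_j}(n)$ separately, and chooses $\varepsilon_j=\eta_s\sqrt{\log p_j}$ so that $\sum_{j\le 16}(\alpha_{p_j}-\varepsilon_j)=1$; outside an exceptional set of size $O\!\bigl(x^{1-2\eta_{16}^2}\bigr)<x^{0.88097}$ one then has $d:=\prod_{j\le 16}p_j^{\beta_{p_j}(n)}>n$. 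Since $d$ is practical, divides $\binom{2n}{n}$, and every prime factor of $\binom{2n}{n}$ is $\le 2n<2d$, a single application of the elementary lemma ``practical $d\mid N$ with all prime factors of $N$ at most $2d$ $\Rightarrow$ $N$ practical'' finishes the proof. No chain-of-intervals argument, no short-interval prime input, and no Stewart-inequality bookkeeping above $n^{\theta}$ is needed.

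With only the primes $2$ and $3$ your scheme cannot reach $0.88097$. The typical value of $v_2(m)\log 2+v_3(m)\log 3$ is about $(\tfrac12+\tfrac13)\log n=\tfrac56\log n$, so the best $\theta$ you can push to while keeping the exceptional set small is strictly below $5/6$; meanwhile your chain argument forces $\theta$ to exceed roughly $0.68$ (via primes in intervals of relative length $\asymp 1/j$). Balancing Hoeffding deviations $\varepsilon_2,\varepsilon_3$ under $\varepsilon_2+\varepsilon_3\le \tfrac56-\theta$ gives an exceptional-set exponent around $0.98$--$0.99$, an order of magnitude weaker than $0.88097$. Sharper (Cram\'er-type) rate functions improve this a little, but not nearly enough: the point is that $\alpha_2+\alpha_3=\tfrac56<1$, so two primes cannot by themselves produce a practical divisor exceeding $n$, and the residual slack has to be paid for by the short-interval machinery, which is expensive. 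The paper's insight is that taking enough small primes (sixteen, it turns out, is optimal for the Hoeffding bound) makes $\sum\alpha_{p_j}$ comfortably exceed $1$, so the practical divisor already beats $n$ and the whole ``chain above $n^{\theta}$'' layer disappears.
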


Probably, there are only finitely many positive integers $n$ such that $\binom{2n}{n}$ is not a practical number.
By a computer search, we found only three of them below $10^6$, namely $n = 4, 10, 256$.
However, proving the finiteness seems to be out of reach with actual techniques. 
Indeed, on the one hand, if $n$ is a power of $2$ whose base $3$ representation contains only the digits $0$ and $1$, then it can be shown that $\binom{2n}{n}$ is not a practical number (see Proposition~\ref{pro:23} below).
On the other hand, it is an open problem to establish whether there are finitely or infinitely many powers of $2$ of this type~\cite{MR580438, MR1845703, MR2506687, MR623247}.

We conclude by leaving two open questions.
Note that since $\binom{n}{0} = \binom{n}{n} = 1$, we have $0 \leq f(n) \leq n - 1$ for all positive integers $n$.
It is natural to ask when one of the equalities is satisfied.

\begin{que}\label{que:f2}
What are the positive integers $n$ such that $f(n) = 0$ ?
\end{que}

\begin{que}\label{que:fn1}
What are the positive integers $n$ such that $f(n) = n - 1$ ?
\end{que}

Regarding Question~\ref{que:f2}, if $f(n) = 0$ then $n$ must be a power of $2$, otherwise there would exist (see Lemma~\ref{lem:Fine} below) an odd binomial coefficient $\binom{n}{k}$, with $0 < k < n$, and since $1$ is the only odd practical number, we would have $f(n) > 0$.
However, this is not a sufficient condition, since $f(8) = 1$.
Regarding Question~\ref{que:fn1}, if $n = 2^k - 1$, for some positive integer $k$, then $f(n) = n - 1$, because all the binomial coefficients $\binom{n}{k}$, with $0 < k < n$, are odd (see Lemma~\ref{lem:Fine} below) and greater than $1$, and consequently they are not practical numbers.
However, this is not a necessary condition, since $f(5) = 4$.

\subsection*{Notation}
We employ the Landau--Bachmann ``Big Oh'' notation $O$ and the associated Vinogradov symbol $\ll$.
In particular, any dependence of the implied constants is indicated with subscripts.
We write $p_i$ for the $i$th prime number.

\section{Preliminaries}

This section is devoted to some preliminary results needed in the later proofs.
We begin with some lemmas about practical numbers.

\begin{lem}\label{lem:practicaldouble}
If $m$ is a practical number and $n \leq 2m$ is a positive integer, then $mn$ is a practical number.
\end{lem}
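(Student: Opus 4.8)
The plan is to establish directly that every integer $N$ with $1 \le N \le mn - 1$ is a sum of distinct divisors of $mn$, which is exactly the assertion that $mn$ is practical. The key move is to split $N$ by division by $n$ (rather than by the more obvious $m$): write $N = qn + r$ with $q = \lfloor N/n \rfloor$ and $0 \le r \le n - 1$. From $N \le mn - 1$ one gets $0 \le q \le m - 1$, so in particular $q < m$.

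I would then express the two pieces $qn$ and $r$ separately as sums of distinct divisors of $mn$. Since $q < m$ and $m$ is practical, $q$ is a sum of distinct divisors $d_1, \dots, d_s$ of $m$ (the empty sum when $q = 0$); multiplying through by $n$ gives $qn = d_1 n + \dots + d_s n$, and each $d_i n$ divides $mn$ because $d_i$ divides $m$. For the residue $r$: if $r < m$, practicality of $m$ writes $r$ as a sum of distinct divisors of $m$, each of which divides $mn$. The only other case is $m \le r \le 2m - 1$, which — since $r \le n - 1$ — can occur only when $n > m$, and in which the bound $r \le n - 1 \le 2m - 1$ forces $r - m \le m - 1$; here I would put $r = m + (r - m)$, expand $r - m < m$ as a sum of distinct divisors of $m$ (each necessarily less than $m$), and adjoin the divisor $m$ itself. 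In every case $r$ becomes a sum of distinct divisors of $mn$.

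It remains to verify that the divisors used for $qn$ together with those used for $r$ are pairwise distinct, and this is exactly where dividing by $n$ pays off: each divisor occurring in the expansion of $qn$ has the form $d_i n \ge n$, whereas each divisor occurring in the expansion of $r$ is strictly below $n$ — the ones summing to $r$ (or to $r - m$) are at most $r \le n - 1$, while the lone possible extra divisor $m$ appears only when $n > m$. Hence the two collections are disjoint, and their union expresses $N = qn + r$ as a sum of distinct divisors of $mn$. Since $N \in [1, mn - 1]$ was arbitrary, $mn$ is practical.

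The argument uses nothing beyond the definition of a practical number, and the hypothesis $n \le 2m$ enters at a single place: it keeps $r$ below $2m$, so the residue is at worst one copy of the divisor $m$ plus a residue less than $m$. The one step requiring care is the simultaneous distinctness of all the chosen divisors, which is the reason for separating them at the threshold $n$; I do not anticipate any serious difficulty beyond that.
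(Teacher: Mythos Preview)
Your argument is correct: dividing $N$ by $n$ (not by $m$) cleanly separates the divisors used for $qn$ (all $\ge n$) from those used for $r$ (all $< n$), and the hypothesis $n\le 2m$ enters exactly where you say, to ensure $r\le n-1\le 2m-1$ so that $r$ is either $<m$ or equals $m$ plus something $<m$. All distinctness checks go through.

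The paper does not prove this lemma at all; it simply cites Melfi's survey \cite[Lemma~4]{MR1452391}. So there is no ``same approach'' to compare against---your write-up supplies a self-contained elementary proof where the paper defers to the literature. The benefit of your version is that a reader need not chase the reference; the benefit of the paper's choice is brevity, since the lemma is standard in the practical-numbers literature.
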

\begin{proof}
See~\cite[Lemma~4]{MR1452391}.
\end{proof}

\begin{lem}\label{lem:practical2d}
If $d$ is a practical number and $n$ is a positive integer divisible by $d$ and having all prime factors not exceeding $2d$, then $n$ is a practical number.
\end{lem}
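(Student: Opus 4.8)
The plan is to bootstrap from Lemma~\ref{lem:practicaldouble} by multiplying in the primes of $n/d$ one at a time. First I would observe that, since every prime dividing $n/d$ also divides $n$, the quotient $n/d$ is a positive integer all of whose prime factors are at most $2d$. So I can write $n/d = q_1 q_2 \cdots q_k$ as a product of (not necessarily distinct) primes with each $q_j \leq 2d$, and set $m_0 := d$ together with $m_j := m_{j-1} q_j$ for $1 \leq j \leq k$, so that $m_k = n$.

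Then I would prove by induction on $j$ that every $m_j$ is a practical number. The base case $m_0 = d$ is exactly the hypothesis. For the inductive step, note that $m_{j-1} \geq m_0 = d$, hence $q_j \leq 2d \leq 2 m_{j-1}$; since $m_{j-1}$ is practical by the inductive hypothesis, Lemma~\ref{lem:practicaldouble} applied with $m = m_{j-1}$ and the integer $q_j$ in the role of $n$ shows that $m_j = m_{j-1} q_j$ is practical. Taking $j = k$ yields that $n = m_k$ is a practical number, as desired.

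The only point requiring any care is that the hypothesis of Lemma~\ref{lem:practicaldouble} must hold at every stage, i.e. the prime being multiplied in must not exceed twice the current partial product. But this is immediate: the partial products $m_0 \leq m_1 \leq \cdots \leq m_k$ are nondecreasing and start at $d$, so the uniform bound $q_j \leq 2d$ on the individual primes already guarantees $q_j \leq 2 m_{j-1}$ throughout. Consequently there is no genuine obstacle here; the lemma is simply a finite iteration of Lemma~\ref{lem:practicaldouble}.
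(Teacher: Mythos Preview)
Your proof is correct and follows essentially the same approach as the paper: both arguments factor $n/d$ into factors not exceeding $2d$ and then apply Lemma~\ref{lem:practicaldouble} inductively along the partial products. Your version is slightly more explicit (you take the $q_j$ to be primes and spell out the inequality $q_j \le 2d \le 2m_{j-1}$), but the underlying idea is identical.
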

\begin{proof}
By hypothesis, there exist positive integers $q_1,\ldots,q_k\le 2d$ such that $n=dq_1\cdots q_k$. 
Then, using Lemma~\ref{lem:practicaldouble}, it follows by induction that $dq_1\cdots q_m$ is practical for all $m=1,\ldots,k$. In particular, $n$ is practical.
\end{proof}

\begin{lem}\label{lem:practicalp1ps}
We have that $p_1^{a_1} \cdots p_s^{a_s}$ is a practical number, for all positive integers $a_1, \dots, a_s$.
\end{lem}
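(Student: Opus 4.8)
The plan is to argue by induction on $s$. For the base case I would take $s = 0$, where the empty product equals $1$, which is vacuously a practical number (there are no positive integers below $1$ to represent). For the inductive step, assume the claim for $s-1$ and set $d := p_1^{a_1} \cdots p_{s-1}^{a_{s-1}}$, which is a practical number by the induction hypothesis, and $n := d\, p_s^{a_s}$. The goal is then to verify that the pair $(d,n)$ satisfies the hypotheses of Lemma~\ref{lem:practical2d}: since $d \mid n$ is immediate, the only thing to check is that every prime factor of $n$ is at most $2d$.

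Because the prime factors of $n$ are exactly $p_1, \dots, p_s$, it suffices to show $p_s \le 2d$. Here I would invoke Bertrand's postulate, which gives $p_s < 2 p_{s-1}$ for $s \ge 2$ (the case $s = 1$ being trivial, as then $p_1 = 2 = 2d$). Since every exponent is at least $1$, we have $d = p_1^{a_1} \cdots p_{s-1}^{a_{s-1}} \ge p_{s-1}$, and hence $p_s < 2 p_{s-1} \le 2d$, as needed. Lemma~\ref{lem:practical2d} then shows that $n = p_1^{a_1} \cdots p_s^{a_s}$ is a practical number, completing the induction.

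There is no substantial obstacle in this argument; the only point requiring a little care is the bookkeeping at the base of the induction, where Bertrand's postulate is unnecessary and one uses instead the specific value $p_1 = 2$. One could alternatively start the induction at $s = 1$ by recalling directly that every power of $2$ is practical, but folding $s = 0$ into the induction via Lemma~\ref{lem:practical2d} with $d = 1$ keeps the write-up uniform.
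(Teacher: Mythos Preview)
Your proof is correct and follows essentially the same approach as the paper: induction on $s$ combined with Bertrand's postulate $p_{s} < 2p_{s-1}$. The only cosmetic difference is that the paper cites Lemma~\ref{lem:practicaldouble} directly (iterating it $a_s$ times at each step), whereas you route through its corollary Lemma~\ref{lem:practical2d}, which packages that iteration for you.
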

\begin{proof}
It follows easily by induction on $s$, using Lemma~\ref{lem:practicaldouble} and Bertrand's postulate $p_{i + 1} < 2p_i$.
\end{proof}

For each prime number $p$ and for each positive integer $n$, put
\begin{equation*}
T_p(n) := \#\!\left\{0 \leq k \leq n : p \nmid \binom{n}{k} \right\} .
\end{equation*}
We have the following formula for $T_p(n)$.

\begin{lem}\label{lem:Fine}
Let $p$ be a prime number and let
\begin{equation*}
n = \sum_{j \,=\, 0}^s d_j p^j, \quad d_0,\dots,d_s \in \{0,\dots, p - 1\}, \quad d_s \neq 0,
\end{equation*}
be the representation in base $p$ of the positive integer $n$.
Then we have
\begin{equation*}
T_p(n) = \prod_{j \,=\, 0}^s (d_j + 1) .
\end{equation*}
\end{lem}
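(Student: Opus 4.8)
The statement is the classical theorem of Fine, and the plan is to deduce it from Lucas' congruence. Fix $n$ with base-$p$ digits $d_0,\dots,d_s$ as in the statement, and let $k$ be an arbitrary integer with $0 \le k \le n$. Write $k = \sum_{j=0}^s e_j p^j$ in base $p$, allowing leading zero digits so that $e_0,\dots,e_s \in \{0,\dots,p-1\}$; this is possible precisely because $k \le n < p^{s+1}$. Lucas' theorem then gives
\begin{equation*}
\binom{n}{k} \equiv \prod_{j=0}^s \binom{d_j}{e_j} \pmod p .
\end{equation*}

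First I would observe that for digits $d_j, e_j \in \{0,\dots,p-1\}$ one has $p \nmid \binom{d_j}{e_j}$ if and only if $e_j \le d_j$: when $e_j > d_j$ the binomial coefficient vanishes, while when $e_j \le d_j$ the integer $\binom{d_j}{e_j} = d_j!/(e_j!\,(d_j-e_j)!)$ is a quotient of factorials of numbers smaller than $p$, hence coprime to $p$. Combined with the displayed congruence, this shows that $p \nmid \binom{n}{k}$ if and only if $e_j \le d_j$ for every $j = 0,\dots,s$.

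Next I would count the tuples $(e_0,\dots,e_s)$ with $e_j \le d_j$ for all $j$. There are exactly $d_j + 1$ choices for each coordinate, and the choices are independent, so there are $\prod_{j=0}^s (d_j+1)$ such tuples. It remains to check that these tuples are in bijection with the relevant values of $k$: distinct tuples give distinct $k$ by uniqueness of the base-$p$ representation, and any admissible tuple satisfies $k = \sum_{j} e_j p^j \le \sum_j d_j p^j = n$, so each admissible tuple indeed contributes a $k$ in the range $0 \le k \le n$; conversely, every $k$ in that range arises from a unique tuple. Hence $T_p(n) = \prod_{j=0}^s (d_j+1)$, as claimed.

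I expect no serious obstacle here, the result being classical. The only points requiring care are invoking Lucas' theorem (equivalently, one could use Kummer's theorem, which expresses $v_p\binom{n}{k}$ as the number of carries when adding $k$ and $n-k$ in base $p$, the carry-free case being exactly $e_j \le d_j$ for all $j$) and verifying that the admissibility condition $e_j \le d_j$ already forces $k \le n$, so that no separate range restriction is needed and there is neither over- nor under-counting.
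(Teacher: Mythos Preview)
Your argument via Lucas' congruence is correct and is the standard derivation of Fine's formula; every step is sound, including the verification that the digit condition $e_j \le d_j$ for all $j$ already guarantees $0 \le k \le n$ so that no separate range check is needed.

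By way of comparison, the paper does not actually supply a proof of this lemma: it simply cites Fine's original article \cite{MR0023257}. Your write-up therefore goes further than the paper does, giving a self-contained argument rather than a reference. The content is the same classical result either way; the only thing one might add is an explicit reference for Lucas' theorem, since that is the one external ingredient you invoke.
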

\begin{proof}
See~\cite[Theorem~2]{MR0023257}.
\end{proof}

For each prime number $p$, let us define
\begin{equation*}
\omega_p := \frac{\log((p + 1) / 2)}{\log p} .
\end{equation*}
The quantity $\omega_p$ appears in the following upper bound for $T_p(n)$.

\begin{lem}\label{lem:Tpupper}
Let $p$ be a prime number and fix $\varepsilon \in (0,\nicefrac{1}{2})$.
Then, for all $x \geq 1$, we have 
\begin{equation*}
T_p(n) < n^{\omega_p + \varepsilon}
\end{equation*}
for all positive integers $n \leq x$ but at most $O(p^3 x^{1 - \varepsilon})$ exceptions.
\end{lem}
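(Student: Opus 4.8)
The plan is to establish the bound via a first-moment (Markov-type) argument on $T_p$, after sorting the integers according to their number of base-$p$ digits. The mechanism that makes this succeed is twofold: by Lemma~\ref{lem:Fine} the sum of $T_p(n)$ over all $n$ of a fixed base-$p$ length factors completely, and the quantity $\omega_p$ is defined precisely so that $p^{\omega_p} = (p+1)/2$, which is exactly the mean value of $d+1$ as $d$ ranges over a digit in $\{0,\dots,p-1\}$.

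First I would fix an integer $s \ge 0$ and look at the $n$ with exactly $s+1$ base-$p$ digits, that is, $p^s \le n < p^{s+1}$. Since $(p+1)/2 \ge 1$ for every prime $p$ we have $\omega_p > 0$, so $n \mapsto n^{\omega_p+\varepsilon}$ is increasing and every exceptional such $n$ satisfies $T_p(n) \ge p^{s(\omega_p+\varepsilon)}$. By Markov's inequality, since these $n$ all lie in $\{0,\dots,p^{s+1}-1\}$ and $T_p \ge 0$, their number is at most
\[
p^{-s(\omega_p+\varepsilon)} \sum_{n \,=\, 0}^{p^{s+1}-1} T_p(n) .
\]
Writing each $n < p^{s+1}$ in base $p$ with digits $d_0,\dots,d_s \in \{0,\dots,p-1\}$ (leading zeros allowed, contributing only harmless factors $1$ in Lemma~\ref{lem:Fine}), the sum on the right factors as
\[
\sum_{n \,=\, 0}^{p^{s+1}-1} T_p(n) = \prod_{j \,=\, 0}^{s} \left( \sum_{d \,=\, 0}^{p-1} (d+1) \right) = \left( \frac{p(p+1)}{2} \right)^{s+1} .
\]

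Substituting $p^{\omega_p} = (p+1)/2$, the number of exceptional $n$ with $p^s \le n < p^{s+1}$ is then at most $\tfrac{p(p+1)}{2}\, p^{s(1-\varepsilon)}$, and I would finish by summing this geometric progression over $0 \le s \le \lfloor \log x/\log p\rfloor$. Using $p^{\lfloor \log x/\log p\rfloor} \le x$ and $p^{1-\varepsilon}-1 \ge 2^{1-\varepsilon}-1 > \sqrt{2}-1$ (valid because $\varepsilon < 1/2$), this gives a total of at most
\[
\frac{p(p+1)}{2} \cdot \frac{p\,x^{1-\varepsilon}}{\sqrt{2}-1} = O\!\left(p^3 x^{1-\varepsilon}\right)
\]
exceptions, as required. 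I do not expect a genuine obstacle here; the one point that needs care is the bookkeeping of the exponents — in particular, recognizing that the cancellation from $p^{\omega_p} = (p+1)/2$ is exactly what turns the first moment $(p(p+1)/2)^{s+1}$ into the clean bound $p^{s(1-\varepsilon)}$, so that a crude first moment already suffices and no higher moments of $T_p$ are needed.
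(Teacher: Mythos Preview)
Your proposal is correct and follows essentially the same approach as the paper: a base-$p$ dyadic decomposition, the factorisation of $\sum T_p(n)$ via Lemma~\ref{lem:Fine}, a Markov-type bound exploiting $p^{\omega_p}=(p+1)/2$, and a geometric summation using $\varepsilon<\tfrac12$ to control the denominator. The only differences from the paper's proof are cosmetic (index shifts and the explicit constant $\sqrt{2}-1$ in place of an implied $\ll$).
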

\begin{proof}
For $x\ge 1$, let $k$ be the smallest integer such that $x < p^k$.
Clearly, we have
\begin{align}\label{equ:boundTper}
E(x) &:= \#\big\{n \leq x : T_p(n) \geq n^{\omega_p + \varepsilon}\big\} \nonumber\\
&\leq \sum_{j \,=\, 1}^k \#\big\{p^{j - 1} \leq n < p^j : T_p(n) \geq p^{(j - 1)(\omega_p + \varepsilon)}\big\} .
\end{align}
Moreover, thanks to Lemma \ref{lem:Fine}, we have
\begin{equation*}
\sum_{p^{j-1} \,<\, n \,\leq\, p^j} T_p(n) \leq \sum_{0 \,\leq\, d_0, \dots, d_{j-1} \,<\, p} \prod_{i\,=\,0}^{j-1} (d_i + 1) = \left(\sum_{d \,=\, 0}^{p - 1} (d + 1)\right)^j = \left(\frac{p(p+1)}{2}\right)^j ,
\end{equation*}
and consequently
\begin{align}\label{equ:boundTj}
\#\big\{p^{j - 1} &\leq n < p^j : T_p(n) \geq p^{(j - 1)(\omega_p + \varepsilon)}\big\} \leq \frac1{p^{(j - 1)(\omega_p + \varepsilon)}} \sum_{p^{j-1} \,<\, n \,\leq\, p^j} T_p(n) \nonumber\\
&\leq \frac1{p^{(j - 1)(\omega_p + \varepsilon)}} \left(\frac{p(p+1)}{2}\right)^j = \frac{p(p+1)}{2} \, p^{(1 - \varepsilon)(j - 1)} < p^{2 + (1 - \varepsilon)(j - 1)} ,
\end{align}
for all positive integers $j$.
Therefore, putting together \eqref{equ:boundTper} and \eqref{equ:boundTj}, and using that $\varepsilon < \nicefrac1{2}$, we obtain
\begin{equation}\label{equ:Ell}
E(x) < \sum_{j \,=\, 1}^k p^{2 + (1 - \varepsilon)(j - 1)} \ll p^{2 + (1 - \varepsilon)k} \leq p^{2 + (1 - \varepsilon)(\log x / \log p + 1)} < p^3 x^{1 - \varepsilon} ,
\end{equation}
as desired.
\end{proof}

\begin{rmk}
The constant $\nicefrac1{2}$ in the statement of Lemma~\ref{lem:Tpupper} has no particular importance, it is only needed to justify the $\ll$ in~\eqref{equ:Ell}.
Any other real number less than $1$ would be fine.
\end{rmk}

For all $x \geq 1$, let $\kappa(x)$ be the smallest integer $k \geq 1$ such that $p_1 \cdots p_k \geq x$.

\begin{lem}\label{lem:asymptoticssxandpsx}
We have
\begin{equation*}
\kappa(x) \sim \frac{\log x}{\log \log x} \quad\text{and}\quad p_{\kappa(x)} \sim \log x ,
\end{equation*}
as $x\to \infty$.
\end{lem}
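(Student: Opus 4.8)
The plan is to relate $\kappa(x)$ to the Chebyshev function $\vartheta(y) := \sum_{p \,\le\, y} \log p$ and then invoke the Prime Number Theorem. The key observation is that $\log(p_1 \cdots p_k) = \vartheta(p_k)$, so that the minimality in the definition of $\kappa(x)$ — which gives $p_1 \cdots p_{\kappa(x)} \ge x > p_1 \cdots p_{\kappa(x) - 1}$ once $x$ is large enough that $\kappa(x) \ge 2$ — translates into the chain of inequalities
\begin{equation*}
\vartheta\!\left(p_{\kappa(x) - 1}\right) < \log x \le \vartheta\!\left(p_{\kappa(x)}\right).
\end{equation*}

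Next I would use the Prime Number Theorem in the form $\vartheta(y) \sim y$ as $y \to \infty$, together with its consequence $p_{k} \sim p_{k-1}$ (which follows, for instance, from $p_{k+1} - p_k = o(p_k)$, itself a standard corollary of the Prime Number Theorem). Since $\kappa(x) \to \infty$ as $x \to \infty$, both ends of the displayed chain are asymptotic to $p_{\kappa(x)}$: indeed $\vartheta(p_{\kappa(x)}) \sim p_{\kappa(x)}$ and $\vartheta(p_{\kappa(x) - 1}) \sim p_{\kappa(x) - 1} \sim p_{\kappa(x)}$. Squeezing $\log x$ between these two quantities yields $p_{\kappa(x)} \sim \log x$, which is the second claimed asymptotic.

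For the first asymptotic, write $\kappa(x) = \pi(p_{\kappa(x)})$, where $\pi$ denotes the prime-counting function, and apply the Prime Number Theorem again in the form $\pi(y) \sim y / \log y$ to obtain $\kappa(x) \sim p_{\kappa(x)} / \log p_{\kappa(x)}$. Finally, since $p_{\kappa(x)} \to \infty$ and $p_{\kappa(x)} \sim \log x$, passing to logarithms preserves the asymptotic equivalence, so $\log p_{\kappa(x)} \sim \log \log x$; combining this with the previous relation gives $\kappa(x) \sim \log x / \log \log x$, as desired.

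The argument is essentially a bookkeeping exercise with asymptotic relations, so I do not expect a serious obstacle. The only points requiring a little care are verifying that $\kappa(x) \ge 2$ for large $x$ (so that the lower bound $p_1 \cdots p_{\kappa(x)-1} < x$ is actually available) and that $\kappa(x) \to \infty$, both of which are immediate from $p_1 \cdots p_k \to \infty$, and correctly handling the lower end $p_{\kappa(x)-1}$ of the sandwich through the relation $p_k \sim p_{k-1}$.
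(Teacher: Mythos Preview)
Your proposal is correct and follows essentially the same route as the paper: both arguments sandwich $\log x$ between $\log(p_1\cdots p_{\kappa(x)-1})$ and $\log(p_1\cdots p_{\kappa(x)})$ and invoke the Prime Number Theorem to obtain $p_{\kappa(x)}\sim\log x$, then deduce the asymptotic for $\kappa(x)$. The only cosmetic difference is that the paper phrases the PNT input as $\log(p_1\cdots p_k)\sim p_k\sim k\log k$ and manipulates $\kappa(x)\log\kappa(x)$ directly, whereas you use the equivalent formulations $\vartheta(y)\sim y$ and $\pi(y)\sim y/\log y$ together with $\kappa(x)=\pi(p_{\kappa(x)})$.
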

\begin{proof}
As a well-known consequence of the Prime Number Theorem, we have
\begin{equation}\label{equ:logprimorial}
\log(p_1 \cdots p_k) \sim p_k \sim k \log k ,
\end{equation}
as $k \to +\infty$.
Since
\begin{equation*}
\log(p_1 \cdots p_{\kappa(x) - 1}) < \log x \leq \log(p_1 \cdots p_{\kappa(x)}) ,
\end{equation*}
and $\kappa(x) \to +\infty$ as $x \to +\infty$, by \eqref{equ:logprimorial} we obtain
\begin{equation*}
p_{\kappa(x)} \sim \kappa(x) \log \kappa(x) \sim \log x ,
\end{equation*}
which in turn implies
\begin{equation*}
\kappa(x) \sim \frac{\kappa(x) \log \kappa(x)}{\log \kappa(x) + \log\log \kappa(x)} \sim \frac{\log x}{\log \log x} ,
\end{equation*}
as desired.
\end{proof}

For every prime number $p$ and every positive integer $n$, let $\beta_p(n)$ be the $p$-adic valuation of the central binomial coefficient $\binom{2n}{n}$.

\begin{lem}\label{lem:betap}
For each prime $p$ and all positive integers $n$, we have that $\beta_p(n)$ is equal to the number of digits of $n$ in base $p$ which are greater than $(p - 1) / 2$.
\end{lem}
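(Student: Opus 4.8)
The plan is to translate $\beta_p(n)$ into a count of carries in base $p$ and then to extract that count from the base-$p$ digits of $n$. For the first step I would apply Legendre's formula $v_p(m!) = (m - s_p(m))/(p-1)$, where $s_p(m)$ denotes the sum of the base-$p$ digits of $m$, to $\binom{2n}{n} = (2n)!/(n!)^2$; this yields
\begin{equation*}
\beta_p(n) = \frac{2n - s_p(2n)}{p-1} - 2\cdot\frac{n - s_p(n)}{p-1} = \frac{2\,s_p(n) - s_p(2n)}{p-1} .
\end{equation*}
Alternatively, and more quickly, Kummer's theorem identifies $\beta_p(n)$ directly with the number $C$ of carries that occur when $n$ is added to itself in base $p$, which is what the identity above also produces.

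Next I would make the digits explicit. Writing $n = \sum_{j=0}^{s} d_j p^j$ in base $p$ and performing the doubling digit by digit, set $c_0 = 0$ and, for $j \geq 0$, let $c_{j+1} \in \{0,1\}$ be the carry out of position $j$, so that the $j$-th digit of $2n$ equals $2 d_j + c_j - p\, c_{j+1}$. Summing over $j$ makes the carry terms telescope, giving $s_p(2n) = 2\,s_p(n) - (p-1)\,C$ with $C = \sum_{j \geq 1} c_j$; inserting this into the displayed identity recovers $\beta_p(n) = C$, consistently with Kummer.

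It then remains to show that $C$ equals the number of indices $j$ with $d_j > (p-1)/2$. The clean half is the lower bound: a carry is produced out of position $j$ exactly when $2 d_j + c_j \geq p$, and whenever $d_j > (p-1)/2$ one already has $2 d_j \geq p$ (for $p = 2$ since then $d_j = 1$, and for odd $p$ since then $d_j \geq (p+1)/2$), so each such digit forces a carry, and distinct such digits force carries at distinct positions; hence $C \geq \#\{j : d_j > (p-1)/2\}$ with no effort. The reverse inequality is the \emph{crux}: one has to match every carry $c_{j+1} = 1$ to a digit of $n$ exceeding $(p-1)/2$, and the obstacle is carry propagation, namely the positions $j$ at which $d_j \leq (p-1)/2$ but an incoming carry $c_j = 1$ nevertheless produces $c_{j+1} = 1$ (for odd $p$ this can occur only when $d_j = (p-1)/2$). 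Pinning down exactly how such propagation behaves — which runs of digits it can cross and where it must stop — is the one delicate point, and is where I expect the main effort of the proof to concentrate; combining it with the two preceding steps gives the stated formula.
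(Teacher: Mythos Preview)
Your route via Kummer's theorem is exactly the paper's: its entire proof is the one sentence that the claim is ``a straightforward consequence'' of Kummer's identification of $\beta_p(n)$ with the number of carries in $n+n$ in base $p$. Your first two steps recover this, and your lower bound $C \geq \#\{j : d_j > (p-1)/2\}$ is correct and immediate. Where you and the paper part ways is on the reverse inequality: the paper treats it as obvious, while you flag carry propagation through digits equal to $(p-1)/2$ as the crux.

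Your instinct is not merely cautious but decisive: for odd $p$ the equality asserted in the lemma is \emph{false}. Take $p=3$ and $n=5=(12)_3$; the only digit exceeding $(p-1)/2=1$ is $d_0=2$, so the digit count equals $1$, yet $\binom{10}{5}=252=2^2\cdot 3^2\cdot 7$ gives $\beta_3(5)=2$. The extra carry is precisely the propagation you describe: $2+2$ carries into position $1$, and then $1+1+1=3$ carries again through $d_1=1=(p-1)/2$. (For $p=2$ no such propagation is possible, since $2\cdot 0 + c_j \leq 1 < 2$, and there the lemma does hold.) What survives in general is the inequality $\beta_p(n) \geq \#\{j : d_j > (p-1)/2\}$, which you have already proved, and this is all the paper actually needs: in Lemma~\ref{lem:estimatecentralbinomial} one wants an upper bound on $\#\{n : \beta_p(n) \leq (\alpha_p-\varepsilon)j\}$, and since $\beta_p(n)$ dominates the genuinely binomial digit count, the Hoeffding bound of Lemma~\ref{lem:bindistr} applies a fortiori; in Proposition~\ref{pro:23} all base-$3$ digits are at most $1$, so no carries occur at all and equality does hold there. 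Do not spend further effort trying to close the reverse inequality --- it does not close.
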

\begin{proof}
The claim is a straightforward consequence of a theorem of Kummer~\cite{MR1578793} which says that, for positive integers $m, n$, the $p$-adic valuation of $\binom{m + n}{n}$ is equal to the number of carries in the addition $m + n$ done in base $p$.
\end{proof}

\begin{pro}\label{pro:23}
If $n$ is a power of $2$ and if all the digits of $n$ in base $3$ are equal to $0$ or $1$, then $\binom{2n}{n}$ is not a practical number.
\end{pro}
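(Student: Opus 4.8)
The plan is to show that $\binom{2n}{n}$ fails the defining property of practical numbers already at the small value $4$: that is, to check that $4$ cannot be written as a sum of distinct divisors of $\binom{2n}{n}$. Write $n = 2^m$. We read the statement for $n > 1$ (for $n = 1$ one has $\binom{2}{1} = 2$, which is practical), and moreover $n \neq 2$, since $2 = (2)_3$ has a base-$3$ digit equal to $2$, contrary to the hypothesis. Hence $n \geq 4$ and, in particular, $\binom{2n}{n} \geq \binom{8}{4} = 70 > 4$.

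The first step is to determine the exact powers of $2$ and $3$ dividing $\binom{2n}{n}$ by means of Lemma~\ref{lem:betap}. Since $n = 2^m$, its base-$2$ expansion has a single nonzero digit, equal to $1$, which is also the only digit exceeding $(2 - 1)/2 = \nicefrac{1}{2}$; thus $\beta_2(n) = 1$, so that $2 \mid \binom{2n}{n}$ but $4 \nmid \binom{2n}{n}$. By hypothesis every base-$3$ digit of $n$ is $0$ or $1$, hence none exceeds $(3 - 1)/2 = 1$; thus $\beta_3(n) = 0$, so that $3 \nmid \binom{2n}{n}$.

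From this I would conclude as follows: the only divisors of $\binom{2n}{n}$ lying in $\{1, 2, 3, 4\}$ are $1$ and $2$, so every sum of distinct divisors of $\binom{2n}{n}$ not exceeding $4$ belongs to $\{1, 2, 1 + 2\} = \{1, 2, 3\}$. Therefore $4$ — a positive integer smaller than $\binom{2n}{n}$ — is not a sum of distinct divisors of $\binom{2n}{n}$, which means that $\binom{2n}{n}$ is not a practical number, as claimed.

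There is essentially no obstacle in this argument; it is short and relies only on the definition of practical number together with Lemma~\ref{lem:betap} (ultimately Kummer's theorem on carries). The only points that need a little attention are the correct enumeration of the divisors of $\binom{2n}{n}$ below $5$ and the disposal of the degenerate values $n = 1, 2$, which is what guarantees that $4 < \binom{2n}{n}$.
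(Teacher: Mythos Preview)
Your argument is correct and shares its core with the paper's proof: both use Lemma~\ref{lem:betap} to compute $\beta_2(n)=1$ and $\beta_3(n)=0$. Where the paper then concludes by citing Srinivasan's observation that every practical number other than $1$ and $2$ is divisible by $4$ or $6$, you instead argue directly from the definition that $4$ is not a sum of distinct divisors of $\binom{2n}{n}$. This makes your version self-contained (no external reference needed), and as a bonus you explicitly address the degenerate case $n=1$ --- where $\binom{2}{1}=2$ is in fact practical --- which the paper's statement tacitly excludes.
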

\begin{proof}
It follows by Lemma~\ref{lem:betap} that $\beta_2(n)=1$ and $\beta_3(n)=0$, that is, $\binom{2n}{n}$ is an integer of the form $12k\pm 2$. However, it is known that, other than $1$ and $2$, every practical number is divisible by $4$ or $6$, see \cite{MR0027799}.
\end{proof}

We will make use of the following result of probability theory.

\begin{lem}\label{lem:bindistr}
Let $X$ be a random variable following a binomial distribution with $j$ trials and probability of success $\alpha$.
Then
\begin{equation*}
\mathbb{P}[X \leq (\alpha - \varepsilon)j] \leq e^{-2\varepsilon^2 j}
\end{equation*}
for all $\varepsilon > 0$.
\end{lem}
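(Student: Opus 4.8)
The plan is to use the standard exponential-moment (Chernoff) method combined with Hoeffding's lemma on the moment generating function of a bounded, mean-zero random variable. First I would write $X = Y_1 + \cdots + Y_j$, where $Y_1, \dots, Y_j$ are independent Bernoulli random variables with $\mathbb{P}[Y_i = 1] = \alpha$, and set $Z_i := \alpha - Y_i$, so that $\mathbb{E}[Z_i] = 0$ and each $Z_i$ takes values in the interval $[\alpha - 1, \alpha]$, which has length $1$. The event $\{X \leq (\alpha - \varepsilon)j\}$ is exactly the event $\{Z_1 + \cdots + Z_j \geq \varepsilon j\}$, and so for every real $t > 0$, Markov's inequality applied to $e^{t(Z_1 + \cdots + Z_j)}$ together with independence gives
\[
\mathbb{P}[X \leq (\alpha - \varepsilon)j] \leq e^{-t \varepsilon j} \prod_{i = 1}^{j} \mathbb{E}\!\left[e^{t Z_i}\right].
\]

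Next I would invoke Hoeffding's lemma: if $Z$ has $\mathbb{E}[Z] = 0$ and takes values in an interval of length $\ell$, then $\mathbb{E}[e^{tZ}] \leq e^{t^2 \ell^2 / 8}$ for all $t \in \mathbb{R}$. Applying it with $\ell = 1$ bounds each factor above by $e^{t^2/8}$, hence
\[
\mathbb{P}[X \leq (\alpha - \varepsilon)j] \leq e^{-t\varepsilon j + t^2 j / 8}.
\]
Finally I would optimize the exponent over $t > 0$: the quadratic $t \mapsto -t\varepsilon + t^2/8$ attains its minimum at $t = 4\varepsilon$, where its value is $-2\varepsilon^2$, and substituting this choice yields $\mathbb{P}[X \leq (\alpha - \varepsilon)j] \leq e^{-2\varepsilon^2 j}$, as claimed. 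The restriction $t > 0$ is harmless since $4\varepsilon > 0$.

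The only genuinely non-routine ingredient is Hoeffding's lemma itself, which follows from convexity: for $Z \in [a,b]$ one writes $e^{tZ} \leq \frac{b - Z}{b - a}e^{ta} + \frac{Z - a}{b - a}e^{tb}$, takes expectations using $\mathbb{E}[Z] = 0$, and reduces the problem to showing that $\varphi(u) := -\theta u + \log(1 - \theta + \theta e^{u}) \leq u^2/8$, where $\theta := -a/(b-a)$ and $u := t(b-a)$; this in turn follows from $\varphi(0) = \varphi'(0) = 0$ together with the elementary bound $\varphi''(u) \leq 1/4$ for all $u$. Since this is an entirely classical concentration estimate, one could equally well just cite Hoeffding's original work. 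I expect the only thing to watch out for is the bookkeeping of signs, namely that a lower-tail estimate for $X$ translates into an upper-tail estimate for the variables $Z_i$.
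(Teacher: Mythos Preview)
Your argument is correct: the Chernoff bound combined with Hoeffding's lemma gives exactly the stated inequality, and your bookkeeping of signs and your optimization in $t$ are fine. The paper itself does not supply a proof at all; it simply cites Okamoto's 1958 paper \emph{Some inequalities relating to the partial sum of binomial probabilities}, where this bound (in the Bernoulli case) appears as Theorem~1. What you have written is essentially the now-standard derivation of Hoeffding's inequality specialised to i.i.d.\ Bernoulli summands, so your proposal is more self-contained than the paper's treatment while arriving at the identical conclusion; you could equally well replace your full argument with a one-line citation to Hoeffding or Okamoto, as the paper does.
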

\begin{proof}
See~\cite[Theorem~1]{MR0099733}.
\end{proof}

For each prime number $p$, let us define
\begin{equation*}
\alpha_p := \frac1{p} \left\lceil\frac{p - 1}{2}\right\rceil ,
\end{equation*}
so that $\alpha_p$ is the probability that a random digit in base $p$ is greater than $(p-1)/2$.

\begin{lem}\label{lem:estimatecentralbinomial}
Let $p$ be a prime number and fix $\varepsilon \in (0, \nicefrac1{2})$.
Then, for all $x \geq 1$, we have
\begin{equation*}
\beta_p(n) > (\alpha_p - \varepsilon) \frac{\log n}{\log p}
\end{equation*}
for all positive integers $n \leq x$ but at most $O(px^{1 - 2\varepsilon^2 / \log p})$ exceptions.
\end{lem}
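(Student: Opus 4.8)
The plan is to follow the same pattern as the proof of Lemma~\ref{lem:Tpupper}, replacing the Markov-type estimate used there by the large-deviation bound of Lemma~\ref{lem:bindistr}.

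The key point is a probabilistic reinterpretation. By Lemma~\ref{lem:betap}, $\beta_p(n)$ is the number of base-$p$ digits of $n$ that exceed $(p-1)/2$, and by construction $\alpha_p$ is precisely the probability that a uniformly random element of $\{0,1,\dots,p-1\}$ exceeds $(p-1)/2$. Hence, if $n$ is drawn uniformly at random from $\{0,1,\dots,p^j-1\}$, its base-$p$ digits are independent and uniform, so $\beta_p(n)$ is distributed as a binomial random variable $X_j$ with $j$ trials and probability of success $\alpha_p$; in particular $\#\{0\le n<p^j:\beta_p(n)\le(\alpha_p-\varepsilon)j\}=p^j\,\mathbb{P}[X_j\le(\alpha_p-\varepsilon)j]$.

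Now fix $x\ge 1$ and let $k$ be the least integer with $x<p^k$. If $\alpha_p<\varepsilon$, then $(\alpha_p-\varepsilon)\log n/\log p<0\le\beta_p(n)$ for all integers $n\ge 2$, so the only possible exception is $n=1$, which is consistent with the claimed bound (its exponent $1-2\varepsilon^2/\log p$ being positive, $p\,x^{1-2\varepsilon^2/\log p}\ge 2$); so assume $\alpha_p\ge\varepsilon$. Splitting the integers $n\in[1,x]$ according to the number $j\in\{1,\dots,k\}$ of their base-$p$ digits, so that $p^{j-1}\le n<p^j$, we have $\log n/\log p<j$, and hence (using $\alpha_p-\varepsilon\ge 0$) every exceptional $n$ with $j$ digits satisfies $\beta_p(n)\le(\alpha_p-\varepsilon)j$. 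Writing $E(x)$ for the number of exceptional $n\le x$, the distributional identity above together with Lemma~\ref{lem:bindistr} gives
\begin{align*}
E(x)&\le\sum_{j=1}^{k}\#\big\{0\le n<p^j:\beta_p(n)\le(\alpha_p-\varepsilon)j\big\}=\sum_{j=1}^{k}p^j\,\mathbb{P}\big[X_j\le(\alpha_p-\varepsilon)j\big]\\
&\le\sum_{j=1}^{k}p^j e^{-2\varepsilon^2 j}=\sum_{j=1}^{k}\big(p\,e^{-2\varepsilon^2}\big)^j.
\end{align*}

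It then remains to estimate this geometric sum exactly as in Lemma~\ref{lem:Tpupper}. Since $p\ge 2$ and $\varepsilon<\nicefrac1{2}$, the ratio $r:=p\,e^{-2\varepsilon^2}$ satisfies $r>2e^{-1/2}>1$, so $\sum_{j=1}^{k}r^j\ll r^k$ with an absolute implied constant; and from $p^{k-1}\le x<p^k$ one gets $p^k\le px$ and $k>\log x/\log p$, whence
\begin{equation*}
r^k=p^k e^{-2\varepsilon^2 k}\le px\cdot e^{-2\varepsilon^2(\log x)/\log p}=p\,x^{1-2\varepsilon^2/\log p}.
\end{equation*}
This yields $E(x)\ll p\,x^{1-2\varepsilon^2/\log p}$ with an absolute constant, as required. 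The only step that is more than bookkeeping is the passage to a binomial random variable: one must verify that its success parameter is exactly $\alpha_p$ so that Lemma~\ref{lem:bindistr} applies without loss; the hypothesis $\varepsilon<\nicefrac1{2}$ is used, just as in Lemma~\ref{lem:Tpupper}, only to keep $r$ bounded away from $1$ and thereby make the geometric sum cost an absolute constant.
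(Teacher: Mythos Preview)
Your proof is correct and follows essentially the same approach as the paper: the same dyadic (base-$p$) splitting, the same reinterpretation of $\beta_p(n)$ as a binomial random variable via Lemma~\ref{lem:betap}, the same application of Lemma~\ref{lem:bindistr}, and the same estimation of the resulting geometric sum using $\varepsilon<\nicefrac12$. Your separate treatment of the case $\alpha_p<\varepsilon$ is a small piece of extra care that the paper omits (the monotonicity step from $\log n/\log p<j$ to $(\alpha_p-\varepsilon)\log n/\log p\le(\alpha_p-\varepsilon)j$ tacitly assumes $\alpha_p\ge\varepsilon$), but the argument is otherwise identical.
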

\begin{proof}
For $x \geq 1$, let $k$ be the smallest integer such that $x < p^k$.
Clearly, we have
\begin{align}\label{equ:betapE1}
E(x) &:= \#\!\left\{n \leq x : \beta_p(n) \leq (\alpha_p - \varepsilon)\frac{\log n}{\log p} \right\} \nonumber\\
&\leq \sum_{j \,=\, 1}^k \#\big\{p^{j - 1} \leq n < p^j : \beta_p(n) \leq (\alpha_p - \varepsilon) j\big\} \nonumber\\
&\leq \sum_{j \,=\, 1}^k \#\big\{0 \leq n < p^j : \beta_p(n) \leq (\alpha_p - \varepsilon) j\big\} . 
\end{align}
Given an integer $j \geq 1$, let us for a moment consider $n$ as a random variable uniformly distributed in $\{0, \dots, p^j - 1\}$.
Then, the digits of $n$ in base $p$ are $j$ independent random variables uniformly distributed in $\{0,\dots, p - 1\}$.
Hence, as a consequence of Lemma~\ref{lem:betap}, we obtain that $\beta_p(n)$ follows a binomial distribution with $j$ trials and probability of success $\alpha_p$.
In turn, Lemma~\ref{lem:bindistr} yields
\begin{equation}\label{equ:betapE2}
\#\{0 \leq n < p^j : \beta_p(n) \leq (\alpha_p - \varepsilon) j\} \leq p^j e^{-2\varepsilon^2 j} .
\end{equation}
Therefore, putting together \eqref{equ:betapE1} and \eqref{equ:betapE2}, and using that $\varepsilon < \nicefrac1{2}$, we get
\begin{equation}\label{equ:Ellagain}
E(x) \le \sum_{j \,=\, 1}^k p^j e^{-2\varepsilon^2 j} \ll (p e^{-2\varepsilon^2})^k \leq (p e^{-2\varepsilon^2})^{\log x / \log p + 1} < p x^{1 - 2\varepsilon^2 / \log p} ,
\end{equation}
as desired.
\end{proof}

\begin{rmk}
The constant $\nicefrac1{2}$ in the statement of Lemma~\ref{lem:estimatecentralbinomial} has no particular importance, it is only needed to justify the $\ll$ in~\eqref{equ:Ellagain}.
Any other real number less than $(\tfrac1{2}\log 2)^{1/2}$ would be fine.
\end{rmk}

\section{Proof of Theorem~\ref{thm:row}}

Assume $x \geq 3$ sufficiently large, and put
\begin{equation*}
\varepsilon := \frac{\delta - \gamma}{\log \log x} + \frac{4 \log \log x}{\log x} \in (0, \nicefrac1{2}) .
\end{equation*}
Let $n$ be a positive integer.
By Lemma~\ref{lem:practicalp1ps} and by the definition of $\kappa(n)$, we know that $p_1 \cdots p_{\kappa(n)}$ is a practical number greater than or equal to $n$.
Since all the prime factors of $\binom{n}{k}$ are not exceeding $n$, Lemma~\ref{lem:practical2d} tell us that if $p_1 \cdots p_{\kappa(n)}$ divides $\binom{n}{k}$ then $\binom{n}{k}$ is practical.
Consequently, we have
\begin{equation*}
f(n) \leq \#\!\left\{0 \leq k \leq n : p_1 \cdots p_{\kappa(n)} \nmid \binom{n}{k}\right\} \leq \sum_{j \,=\, 1}^{\kappa(n)} T_{p_j} (n) .
\end{equation*}
Therefore, it follows from Lemma~\ref{lem:Tpupper} that
\begin{equation}\label{equ:gupper1}
f(n) < \sum_{j \,=\, 1}^{\kappa(n)} n^{\omega_{p_j} + \varepsilon} ,
\end{equation}
for all positive integers $n \leq x$ but at most
\begin{equation*}
\ll \sum_{j \,=\, 1}^{\kappa(x)} p_j^3 x^{1 - \varepsilon} \ll p_{\kappa(x)}^4 x^{1 - \varepsilon} \ll (\log x)^4 x^{1 - \varepsilon} = x^{1 - (\delta - \gamma) / \log \log x}
\end{equation*}
exceptions, where we also used Lemma~\ref{lem:asymptoticssxandpsx}.

Suppose that $n$ satisfies \eqref{equ:gupper1}.
Since $\omega_p$ is a monotone increasing function of $p$, we get that
\begin{equation}\label{equ:mfbound}
f(n) < \kappa(n) n^{\omega_{p_{\kappa(n)}} + \varepsilon} = n^{\omega_{p_{\kappa(n)}} + \log \kappa(n) / \log n + \varepsilon} .
\end{equation}

Moreover, for $n \gg_\gamma 1$ we have
\begin{equation}\label{equ:mbound1}
\omega_{p_{\kappa(n)}} < 1 - \frac{\log 2}{\log p_{\kappa(n)}} + \frac1{p_{\kappa(n)} \log p_{\kappa(n)}} < 1 - \frac{\log 2 - \gamma/4}{\log \log n} ,
\end{equation}
and
\begin{equation}\label{equ:mbound2}
\frac{\log \kappa(n)}{\log n} < \frac{\gamma / 4}{\log \log n} ,
\end{equation}
where we used Lemma~\ref{lem:asymptoticssxandpsx}.
Furthermore, since $n \leq x$, we have
\begin{equation}\label{equ:mbound3}
\varepsilon < \frac{\delta - \gamma/2}{\log \log n} .
\end{equation}
Consequently, putting together \eqref{equ:mbound1}, \eqref{equ:mbound2}, and \eqref{equ:mbound3}, we obtain
\begin{equation*}
\omega_{p_{\kappa(n)}} + \frac{\log \kappa(n)}{\log n} + \varepsilon < 1 - \frac{\log 2 - \delta}{\log \log n} ,
\end{equation*}
which, inserted into \eqref{equ:mfbound}, gives
\begin{equation*}
f(n) < n^{1 - (\log 2 - \delta)/\log \log n}
\end{equation*}
as desired.
The proof is complete.

\section{Proof of Corollary~\ref{cor:nk}}

Obviously, we can assume $\varepsilon < \tfrac1{2}\log 2$.
Put $\gamma := 2\varepsilon$ and $\delta := \tfrac{1}{2}\log 2 + \varepsilon$, so that $0 < \gamma < \delta < \log 2$.
For all $x \geq 3$, let $\mathcal{E}(x)$ be the set of exceptional $n \leq x$ of Theorem~\ref{thm:row}.
Then we have
\begin{align*}
\sum_{n \,\leq\, x} f(n) &= \sum_{n \,\notin\, \mathcal{E}(x)} f(n) + \sum_{n \,\in\, \mathcal{E}(x)} f(n) < \sum_{n \,\leq\, x} n^{1 - (\log 2 - \delta)/\log \log n} + \#\mathcal{E}(x)\,x \\
&\ll_\varepsilon x^{2 - (\log 2 - \delta) / \log \log x} + x^{2 - (\delta - \gamma) / \log \log x} \ll x^{2 - (\frac1{2}\log 2 - \varepsilon) / \log \log x} ,
\end{align*}
as claimed.

\section{Proof of Theorem~\ref{thm:central}}

For the sake of notation, put
\begin{equation*}
s := 16, \quad \eta_s := \frac{\sum_{i = 1}^s \alpha_{p_i} - 1}{\sum_{i = 1}^s \sqrt{\log p_i}}, \quad \varepsilon_j := \eta_s\sqrt{\log p_j} ,
\end{equation*}
for $j = 1,\dots,s$.
A computation shows that $\varepsilon_j \in (0, \nicefrac1{2})$ for $j=1,\dots,s$.

For $x \geq 1$, it follows from Lemma~\ref{lem:estimatecentralbinomial} that
\begin{equation}\label{equ:betapjlogpj}
\sum_{j \,=\, 1}^s \beta_{p_j}(n) \log p_j > \sum_{j \,=\, 1}^s (\alpha_{p_j} - \varepsilon_j)\log n = \log n ,
\end{equation}
for all positive integers $n \leq x$, but at most
\begin{equation*}
\ll \sum_{j \,=\, 1}^s p_j x^{1 - 2\varepsilon_j^2 / \log p_j} \ll x^{1 - 2\eta_s^2} < x^{0.88097}
\end{equation*}
exceptions.
Suppose that $n$ is a positive integer satisfying \eqref{equ:betapjlogpj}.
Then,
\begin{equation*}
d := \prod_{j \,=\, 1}^s p_j^{\beta_{p_j}(n)} > n .
\end{equation*}
Also, by Lemma~\ref{lem:practicalp1ps} we have that $d$ is a practical number, and by the definition of $\beta_{p_j}(n)$ we have that $d$ divides $\binom{2n}{n}$.
Moreover, since all the prime factors of $\binom{2n}{n}$ are not exceeding $2d$, Lemma~\ref{lem:practical2d} yields that $\binom{2n}{n}$ is practical.
The proof is complete.

\begin{rmk}
A comment is in order to explain the choice of the parameters $\varepsilon_j$ in the proof of Theorem~\ref{thm:central}.
Given a positive integer $s$, one could fix some prime numbers $q_1 <\cdots<q_s$ and some real numbers $\varepsilon_1,\ldots,\varepsilon_s \in (0,\nicefrac{1}{2})$ such that $q_1\cdots q_s$ is a practical number and $\sum_{j=1}^s (\alpha_{q_j}-\varepsilon_j)\ge 1$. 
Everything would proceed similarly, with an estimate of the number of exceptions given by 
\begin{equation*}
O\!\left(x^{\max\{1-2\varepsilon_1^2/\log q_1,\ldots,1-2\varepsilon_s^2/\log q_s\}}\right). 
\end{equation*}
To minimize the exponent of $x$, the optimal choice for $\varepsilon_j$ is 
\begin{equation*}
\varepsilon_j = \eta_s(q_1, \dots, q_s)\sqrt{\log q_j}, \quad \eta_s(q_1, \dots, q_s) := \frac{\sum_{i = 1}^s \alpha_{q_i} - 1}{\sum_{i = 1}^s \sqrt{\log q_j}} ,
\end{equation*}
for $j=1,\ldots,s$, which gives the estimate
\begin{equation*}
O\!\left(x^{1-2\eta_s(q_1,\ldots,q_s)^2}\right) .
\end{equation*}
Since $\alpha_p=\frac{1}{2}+O(\frac{1}{p})$ for each prime number $p$, we get that $\eta_s(q_1,\ldots,q_s)$ is maximized when $q_j=p_j$, for $j=1,\ldots,s$, and that $\eta_s(p_1, \dots, p_s) \to 0$ as $s \to +\infty$.
Lastly, some numeratical computations verify that the maximum of $\eta_s(p_1, \dots, p_s)$ is reached for $s = 16$.
\end{rmk}

\bibliographystyle{amsplain}

\end{document}